\theoremstyle{definition}
\newtheorem{theorem}{Theorem.}[section]
\newtheorem{definition}[theorem]{Definition.}
\newtheorem{example}[theorem]{Example.}
\newtheorem{remark}[theorem]{Remark.}
\begin{document}
\title{Observational Banach Manifolds}
\author[Mohammad Mehrpooya]{Mohammad Mehrpooya${}^1$}
\author[Mohammadreza Molaei]{Mohammadreza Molaei${}^2$}
\date{}
\dedicatory{\textit{\normalsize Department of Mathematics, Faculty of Mathematics and Computer\\Shahid Bahonar University of Kerman, 7616914111, Kerman, Iran}\\
{\normalfont ${}^1$\texttt{sdmehrpooya@gmail.com} \ \  ${}^2$\texttt{mrmolaei@uk.ac.ir}}}
\maketitle

\begin{abstract} %\normalsize
In this paper, the concept of selective real manifolds is extended. It is proved that the product of two selective Banach manifolds is a selective Banach manifold. The notion of the $\alpha$--level differentiation of the mappings between selective Banach manifolds is presented. Basic properties of $(r, \alpha)$--differentiable maps are studied. Tangent space for at a point of a selective Banach manifold is considered.
\end{abstract}
\noindent {\bf AMS Classification:} 46T99, 53A99. \\
{\bf Keywords:} Banach space; Observer; Selective manifold; Tangent space.\\

\section{Introduction and preliminaries}

%\indent
The problem of finding a mathematical model that can change the signature of a metric on a manifold is an interesting topic for both physicists and mathematicians. It was remained unsolved for a long time \cite{DD12, DD2, DD9} and it has been considered from observer viewpoint in \cite{DD2}. The mathematical model of one dimensional observer has been considered first in \cite{DD20}. In 2006, the second author of this article put forward the idea of the relative manifold by considering the notion of one dimensional observer on a manifold, and adopted a realistic approach to the problem of unity using the concept of relative metrics over the relative manifolds \cite{DD4}. In \cite{DD5}, the notion of multi-dimensional observers introduced, and it is employed to prove a version of Tychonoff Theorem and new concept of topological entropy \cite{DD5, DD11, DD1}. In \cite{DD8}, the concept of synchronization for continuous time dynamical systems from the viewpoint of an observer has been considered. This notion is a generalization of synchronization, and it is proved that the future of the points of the set in which two dynamical systems are relative probability synchronized is the same up to the homeomorphism determined by a relative probability synchronization \cite{DD8}.\\
The idea of a relative metric space, as a mathematical model compatible with a physical phenomena has been considered in 2011 \cite{DD3}. In \cite{DD3} the concept of relative topological entropy for relative semi-dynamical systems on a relative metric space has been  studied. The notion of selective manifolds on $\mathbb{R}^n$, as the suitable finite space for the problem of unity has been presented in 2011 \cite{DD7}.

In this paper, we put forward the notion of a selective manifold over a Banach space \cite{DD33} and we study its characteristics. In this respect, we introduce the idea of a selective manifold on a Banach space in section 2. We prove that the product of two selective Banach manifolds is a selective Banach manifold. Next, we present the concept of $\alpha$--level differentiation of the mappings between selective manifolds and we prove a new version of chain rule theorem for the mappings between selective Banach manifolds in section 3. Then, we study the notion of the tangent space of a selective Banach manifold at a given point in section 4. In the rest of this section, we provide some preliminaries on observers.

Let $M$ be a set. By an observer of dimension $m$ on $M$, we mean a mapping $\mu: M \longrightarrow \prod_{j \in J} I_j$, where $I_j=[0, 1]$ for every $j \in J$ and $CardJ=m$. From a physical point of view, an observer considers finitely many physical parameters like speed, energy, etc. Thus, the product of $[0, 1]$ is used in order to define an observer mathematically. The observational models are widely applied in biology, dynamical systems, geometry \cite{DD4, DD6, DD11, DD1}.

Let $\mu: M \longrightarrow \prod_{j \in J} I_j$ and $\eta: M \longrightarrow \prod_{j \in J} I_j$ be two observers of dimension $m$, where $I_j=[0, 1]$ for all $j \in J$. By $\eta \subseteq \mu$, we mean that $\eta_j(x) \leqslant \mu_j(x)$ for all $x \in M$ and $j \in J$. We write $\eta \subset \mu$ if $\eta \subseteq \mu$ and for given $x \in M$, there is $j\in J$ $\eta_j(x) < \mu_j(x)$.

Let $\mu$ and $\eta$ be two observers of dimension $m$ on the set $M$. We define two $M$ dimensional observers $\mu \bigcup \eta$ and $\mu \bigcap \eta$ by:
\begin{equation*}
(\mu \bigcup \eta)_j(x)=\sup\{\mu_j(x), \eta_j(x)\};
\end{equation*}
\begin{equation*}
(\mu \bigcap \eta)_j(x)=\inf\{\mu_j(x), \eta_j(x)\}.
\end{equation*}

Let $\tau_\mu$ be a collection of subsets of $\mu$ for which the following conditions are satisfied:
\begin{enumerate}[label=a\arabic*),itemindent=1cm,ref=a\arabic*]
\item $\mu \in \tau_\mu$ and $\chi_\phi^m \in \tau_\mu$, where $\chi_\phi^m(x)=\prod_{j \in J}0$;
\item $\lambda \bigcap \eta \in \tau_\mu$, whenever $\lambda, \eta \in \tau_\mu$;
\item if $\{\eta_\alpha\}_{\alpha \in \varLambda}$ is any collection of $\tau_\mu$, then $\bigcup_{\alpha \in \Lambda} \in \tau_\mu$, where $\Lambda$ is an index set.
\end{enumerate}
The collection $\tau_\mu$ is called the $\mu$--topology on $M$.

\section{The $\mu$--Selective Banach manifolds}

Let $\lambda$ be an observer of dimension $m$ on $M$. $\lambda$ is called the constant observer if there exists $j \in J$ such that $\lambda_j(x)$ is constant for all $x \in M$. The collection of all constant observers is denoted by $C$. Suppose that $r$ and $s$ be two constant observers with $r < s$. If $\lambda \in \tau_\mu$, then we define
\begin{equation*}
\lambda^{-1}(r, s) \text{ by } \{x \in M \mid r_j(x) < \lambda_j(x)< s_j(x) \quad \forall j \in J\}.
\end{equation*}
We use the notation $(\tau_\mu)_r^s$ to denote the topology of $M$ is generated by $\{\lambda^{-1}(r, s)\mid\lambda \in \tau_\mu\}$ over $M$.

\begin{definition}
Let $K_1 \subseteq \bigcup_{r, s \in C}(\tau_\mu)_r^s$, and $K_1\neq \emptyset$. Put
\begin{equation*}
K=\{U\mid U=U_1 \bigcap \mu^{-1}(r, s) \quad \& \quad r, s \in C \quad \& \quad U_1 \in K_1\}.
\end{equation*} By a chrat for $M$, we mean a pair $(U, \phi)$, where $U \in K$ and $\phi$ is a one to one mapping over $U$ such that $\phi(U)$ is a $C^n$ Banach manifold. Put
\begin{equation*}
D=\{(U, \phi)\mid(U, \phi) \text{ is a chart for } M\}.
\end{equation*}
$D$ is called a $C^n$ $\mu$--structure if
\begin{enumerate}[label=b\arabic*),itemindent=1cm,ref=b\arabic*]
\item $\mu(M) = \mu(\bigcup_{U \in K}U)$;
\item if $(U, \phi) \in D$ and $(V, \psi) \in D$ for which $\mu(U) = \mu(V)$, then there exist a one to one and onto map $h_0: U \longrightarrow V $ and a $C^n$ Banach diffeomorphism $h:\phi(U)\longrightarrow\psi(V)$ such that $ho\phi = \psi oh_0$, where the restriction of $h_0$ to $U \bigcap V$ is the identity map.
\end{enumerate}
\end{definition}

\begin{example}
Let $M$ be a compact smooth manifold of dimension $m$. Put
\begin{equation*}
N = \{(f, p)\mid f:M \longrightarrow M \text{ is smooth and } p \text{ is a hyperbolic fixed point for } f\}
\end{equation*}
We define
\begin{equation*}
\mu: N \longrightarrow [0, 1]\times[0, 1]
\end{equation*}
\begin{equation*}
\mu(f, p) = (\dfrac{1}{\sigma_p(f)+1}, \dfrac{1}{\delta_p(f)+1});
\end{equation*}
where
\begin{equation*}
\sigma_p(f) = Card(\{\lambda \mid \lambda \text{ is an eigenvalue of } f \text{ at the hyperbolic fixed point } p \quad \& \quad \| \lambda \| > 1\});
\end{equation*}
and
\begin{equation*}
\delta_p(f) = Card(\{\lambda \mid \lambda \text{ is an eigenvalue of } f \text{ at the hyperbolic fixed point } p \quad \& \quad \| \lambda \| < 1\}).
\end{equation*}
Suppose that $r$ and $s$ be two arbitrary constant observers on $N$, $r < s$, and $\varUpsilon = \{\lambda^{-1}(a, b)\mid\lambda \in \tau_\mu\}$. Let $(\tau_\mu)_r^s$ denote the topology generated by $\varUpsilon$ over $N$. Put
\begin{equation*}
K = \{U\mid U=U_0\cap \mu^{-1}(r, s) \quad \& \quad r,s \in C \quad \& \quad U_0 \in K_1\}.
\end{equation*}
We define
\begin{equation*}
\phi: U\longrightarrow L(\mathbb{R}^m, \mathbb{R}^m)
\end{equation*}
\begin{equation*}
(f, p)\longmapsto df(p);
\end{equation*}
where $p$ is a hyperbolic fixed point for $f$. Then, $D = \{(U, \phi)\mid U \in K\}$ is a smooth $\mu$--structure for $N$, and $(N, D)$ is a $\mu$--selective Banach manifold.
\end{example}

\begin{definition}
Let $E$ be a Banach space. $(M, D)$ is called a $\mu$--selective Banach manifold modeled over $E$ if $D$ is a $C^n$ $\mu$--structure.
\end{definition}
Let $(M, D_i)$ be  $C^n$ $\mu_i$--selective Banach manifold for $i \in\{1, 2\}$. We write $D_1 \leq D_2$ if there exists a one to one map $f: \bigcup_{(U, \phi) \in D_1}U\longrightarrow\bigcup_{(V, \psi) \in D_2}V$ such that $\mu_1(U) = \mu_2(f(U))$ and $(f(U), \phi of^{-1}) \in D_2$, where $(U, \phi) \in D_1$. $D_1$ and $D_2$ are called equivalent if $D_1 \leq D_2$ and $D_2 \leq D_1$, where $D_i$ is $C^n$ $\mu_i$--structure, and $i = 1, 2$.

\begin{theorem}\label{item: 1.3}
Let $(M, D)$ be a $C^n$ $\mu$--selective Banach manifold. There exists a $C^n$ $\mu$--structure, $A$, on $M$ such that every $C^n$ $\mu$--structure, $D_0$, on $M$ with $A \leq D_0$ is equivalent to $A$.
\end{theorem}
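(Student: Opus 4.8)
The statement is the selective analogue of the classical fact that every $C^n$ atlas is contained in a unique maximal atlas, so the plan is to construct such a maximal $\mu$--structure explicitly and then read off maximality from the definition of $\leq$. I would define
\[
A=\{(U,\phi)\mid (U,\phi)\text{ is a chart for }M \text{ that is }C^n\text{-compatible with every }(V,\psi)\in D\},
\]
where I call two charts $C^n$--\emph{compatible} when they satisfy the conclusion of axiom \textbf{b2}: whenever $\mu(U)=\mu(V)$ there are a one to one and onto map $h_0\colon U\to V$ and a $C^n$ Banach diffeomorphism $h\colon\phi(U)\to\psi(V)$ with $h\circ\phi=\psi\circ h_0$ and $h_0$ restricting to the identity on $U\bigcap V$. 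Since $D$ is itself a $C^n$ $\mu$--structure, axiom \textbf{b2} applied inside $D$ shows that every chart of $D$ is compatible with every other chart of $D$, whence $D\subseteq A$.

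First I would verify that $A$ is a $C^n$ $\mu$--structure. Axiom \textbf{b1} is immediate from $D\subseteq A$: enlarging the chart collection only enlarges the union of domains, so $\mu\bigl(\bigcup_{(U,\phi)\in A}U\bigr)\supseteq\mu\bigl(\bigcup_{(U,\phi)\in D}U\bigr)=\mu(M)$, and the reverse inclusion is trivial. The substantive point is axiom \textbf{b2} for $A$ itself: given $(U,\phi),(V,\psi)\in A$ with $\mu(U)=\mu(V)$, I must produce the bridging data $h_0$ and $h$. The plan is the classical ``two out of three'' argument: select a chart $(W,\chi)\in D$ whose $\mu$--image is matched with that of $U$ and $V$, invoke compatibility of $(U,\phi)$ with $(W,\chi)$ and of $(V,\psi)$ with $(W,\chi)$ to obtain diffeomorphisms and their accompanying domain bijections, and then compose to set $h=h_{V,W}^{-1}\circ h_{U,W}$ with the corresponding composite bijection on domains. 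I expect this to be the main obstacle, because here the compatibility data carry the extra bookkeeping of the domain bijections $h_0$ and the requirement that they restrict to the identity on overlaps; one must check that the composites still satisfy $h\circ\phi=\psi\circ h_0$, that the identity-on-overlap condition survives composition, and that the $\mu$--matching is preserved. If no single bridging chart of $D$ suffices, I would cover the relevant $\mu$--level by charts of $D$ and glue the locally defined diffeomorphisms, using that each $\phi(U)$ is a $C^n$ Banach manifold to guarantee the glued map is a genuine $C^n$ diffeomorphism.

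Finally I would establish maximality. Suppose $A\leq D_0$, witnessed by a one to one map $f\colon\bigcup_{(U,\phi)\in A}U\to\bigcup_{(V,\psi)\in D_0}V$ with $\mu(U)=\mu(f(U))$ and $(f(U),\phi\circ f^{-1})\in D_0$. To deduce $D_0\leq A$, I would show that every chart $(V,\psi)\in D_0$ is $C^n$--compatible with all of $D$: since $D\subseteq A\leq D_0$, the image charts of $D$ already sit compatibly inside $D_0$, and transporting the transition data of $D_0$ through $f$ shows that the pulled--back chart $(f^{-1}(V),\psi\circ f)$ is compatible with every chart of $D$, hence lies in $A$ by the very definition of $A$. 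Thus $f$ cannot enlarge $A$ beyond equivalence, and $f^{-1}$ provides the map required for $D_0\leq A$. Combining $A\leq D_0$ with $D_0\leq A$ yields that $D_0$ is equivalent to $A$, which is the assertion of the theorem. (Alternatively, one could apply Zorn's lemma to the poset of $C^n$ $\mu$--structures above $D$ ordered by $\leq$ modulo equivalence, taking unions along chains as upper bounds; the maximal element so produced satisfies the stated property.)
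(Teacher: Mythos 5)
Your main construction has genuine gaps, and the paper actually proves the theorem by the route you relegate to your final parenthesis: it applies Zorn's Lemma to the set $\varLambda$ of $C^n$ $\mu$--structures on $M$, shows that a chain $\{D_1, D_2, \ldots\}$ has the union $H=\bigcup_i D_i$ as an upper bound --- verifying axiom b2 for $H$ by pushing a chart $(U,\phi)\in D_i$ through the injection $f$ witnessing $D_i\leq D_j$, applying b2 inside $D_j$ to the pair $(f(U),\phi\circ f^{-1})$, $(V,\psi)$, and composing to get $\eta_0\circ f$ and $\eta$ --- and then takes $A$ to be a maximal element. So your closing alternative is essentially the paper's proof; your primary plan is not, and it breaks in two places.

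First, the bridging step fails. Axiom b1 only asserts $\mu(M)=\mu\bigl(\bigcup_{U\in K}U\bigr)$; it does not provide, for a given chart $(U,\phi)$, any $(W,\chi)\in D$ with $\mu(W)=\mu(U)$ as sets. When no such $W$ exists, your compatibility condition is vacuously satisfied, so $A$ as you define it admits arbitrary bijections of $U$ onto arbitrary $C^n$ Banach manifolds; two such charts with equal $\mu$--images but non-diffeomorphic images then violate b2 for $A$, and your gluing fallback cannot repair this because the locally obtained diffeomorphisms need not agree anywhere. Second, even when a bridging chart $(W,\chi)$ exists, the composite $h_0=h_{V,W,0}^{-1}\circ h_{U,W,0}\colon U\to V$ need not restrict to the identity on $U\bigcap V$: the factors are \emph{global} bijections required to be the identity only on $U\bigcap W$ and $V\bigcap W$ respectively, so a point of $U\bigcap V$ lying outside $W$ may be moved. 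This is exactly where b2 differs from classical chart compatibility, where transition maps live only on overlaps and the two-out-of-three argument restricts them there; here the identity-on-overlap condition does not survive composition. (The paper's own chain-union verification composes $\eta_0\circ f$ without checking this condition either, but the Zorn strategy at least avoids the vacuous-compatibility problem above.) A smaller defect is your maximality step: the injection $f$ witnessing $A\leq D_0$ need not be onto $\bigcup_{(V,\psi)\in D_0}V$, so $f^{-1}$ is not defined on all of $D_0$'s domains, and $f^{-1}(V)$ need not belong to the admissible family $K$ of chart domains; hence ``$f^{-1}$ provides the map required for $D_0\leq A$'' is unjustified, whereas under the Zorn formulation the equivalence of any $D_0\geq A$ with $A$ is immediate from maximality.
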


\begin{proof}
If $\varLambda$ is the set of $C^n$ $\mu$--structures of $M$, and $\{D_1, D_2, \ldots\}$ is a chain of elements of $\Lambda$, then $H=\bigcup_{i\in \mathbb{N}}D_i$ is a $C^n$ $\mu$--structure. We only proved the second axiom: if $(U, \phi), (V, \psi)\in H$ and $\mu(U)=\mu(V)$, then there exist $i\leq j$ such that $(U, \phi)\in D_i$ and $(V, \psi)\in D_j$. Moreover, there is an injective mapping $f$ such that $(f(U), \phi of^{-1})\in D_j$. Since $\mu(f(U))=\mu(V)$, there exists an injective mapping $\eta_0$ from $f(U)$ to $V$ and a $C^n$ difeomorphism $\eta$ from $\psi(U)$ to $\phi(V)$ such that $\phi o\eta_0 of=\eta o \phi$. These properties of $\eta_0 o f$ and $\eta$ imply that axiom two is valid for $(U, \phi)$ and $(V, \psi)$. Thus, each chain has a maximal element. Therefore, Zorn's Lemma implies that $\Lambda$ has at least one maximal element we call it $A$.
\end{proof}

Any $C^n$ $\mu$--structure, $A$, that satisfied the conditions of Theorem \ref{item: 1.3} is called a $C^n$ maximal $\mu$--structure.

\begin{definition}
$(M, A, \mu)$ is called $C^n$ selective Banach manifold if $A$ is a $C^n$ maximal $\mu$--structure on $M$ and $\mu(M \smallsetminus \bigcup_{(U, \phi) \in A}U) = \{0\}$. In this case, $A$ is called a $C^n$ $\mu$--atlas.
\end{definition}

\begin{theorem}\label{item: 1.5}
Let $(M_i, D_i)$ be $C^n$ $\mu_i$--selective Banach manifold for $i \in \{1, 2\}$. Then, \\ $(M_1\times M_2, \varOmega)$ is a $C^n$ $\mu$--selective Banach manifold, where
\begin{equation*}
\varOmega = \{(U_\alpha\times V_\beta, \phi_\alpha\times \psi_\beta) \mid (U_\alpha, \phi_\alpha) \in D_1 \quad \& \quad (V_\beta, \psi_\beta) \in D_2\};
\end{equation*}
whenever
\begin{equation*}
\phi_\alpha \times \psi_\beta: U_\alpha \times V_\beta\longrightarrow (\phi_\alpha \times  \psi_\beta)(U_\alpha \times V_\beta)
\end{equation*}
\begin{equation*}
(\phi_\alpha \times \psi_\beta)(x, y) = (\phi_\alpha(x), \psi_\beta(y));
\end{equation*}
and
\begin{equation*}
\mu: M_1 \times M_2\longrightarrow [0, 1]\times [0, 1]
\end{equation*}
\begin{equation*}
\mu(x, y) = (\mu_1(x), \mu_2(y)).
\end{equation*}
\end{theorem}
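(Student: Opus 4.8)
The plan is to verify directly that $\varOmega$ meets the defining conditions of a $C^n$ $\mu$--structure, namely that every element of $\varOmega$ is a chart for $M_1 \times M_2$ and that axioms b1) and b2) hold; the bulk of the argument is bookkeeping that reduces each condition to the corresponding fact already known for $D_1$ and $D_2$. The two identities I would lean on throughout both come from $\mu(x,y) = (\mu_1(x), \mu_2(y))$: for any $A \subseteq M_1$ and $B \subseteq M_2$ one has $\mu(A \times B) = \mu_1(A) \times \mu_2(B)$, and for constant observers $r = (r_1, r_2)$, $s = (s_1, s_2)$ on the product one has $\mu^{-1}(r,s) = \mu_1^{-1}(r_1, s_1) \times \mu_2^{-1}(r_2, s_2)$, obtained by splitting the defining inequalities across the two index families (note that if $r_1, s_1, r_2, s_2$ are constant observers then so are $r$ and $s$, since a constant component persists).

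First I would check that each $(U_\alpha \times V_\beta, \phi_\alpha \times \psi_\beta)$ is a chart. Injectivity of $\phi_\alpha \times \psi_\beta$ follows from injectivity of $\phi_\alpha$ and $\psi_\beta$, and its image $\phi_\alpha(U_\alpha) \times \psi_\beta(V_\beta)$ is a $C^n$ Banach manifold as a product of two such. To see that $U_\alpha \times V_\beta$ lies in the product $K$, I write $U_\alpha = U_1^\alpha \cap \mu_1^{-1}(r_1, s_1)$ and $V_\beta = V_1^\beta \cap \mu_2^{-1}(r_2, s_2)$ and use $(A \cap B) \times (C \cap D) = (A \times C) \cap (B \times D)$ together with the two identities above to rewrite $U_\alpha \times V_\beta = (U_1^\alpha \times V_1^\beta) \cap \mu^{-1}(r, s)$ with $r = (r_1, r_2)$ and $s = (s_1, s_2)$.

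For axiom b1), since $\bigcup_{\alpha, \beta}(U_\alpha \times V_\beta) = (\bigcup_\alpha U_\alpha) \times (\bigcup_\beta V_\beta)$, applying $\mu$ and the product identity gives $\mu(\bigcup(U_\alpha \times V_\beta)) = \mu_1(\bigcup U_\alpha) \times \mu_2(\bigcup V_\beta)$, which b1) for $D_1$ and $D_2$ turns into $\mu_1(M_1) \times \mu_2(M_2) = \mu(M_1 \times M_2)$. For axiom b2), suppose $\mu(U_\alpha \times V_\beta) = \mu(U_{\alpha'} \times V_{\beta'})$; by the product identity this is $\mu_1(U_\alpha) \times \mu_2(V_\beta) = \mu_1(U_{\alpha'}) \times \mu_2(V_{\beta'})$, and (all factors being nonempty) this forces $\mu_1(U_\alpha) = \mu_1(U_{\alpha'})$ and $\mu_2(V_\beta) = \mu_2(V_{\beta'})$. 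Applying b2) for $D_1$ and $D_2$ yields bijections $h_0^1, h_0^2$ and $C^n$ diffeomorphisms $h^1, h^2$ intertwining the factor charts and restricting to the identity on the respective overlaps. I would then set $h_0 = h_0^1 \times h_0^2$ and $h = h^1 \times h^2$ and verify componentwise that $h \circ (\phi_\alpha \times \psi_\beta) = (\phi_{\alpha'} \times \psi_{\beta'}) \circ h_0$, and that $h_0$ restricts to the identity on $(U_\alpha \cap U_{\alpha'}) \times (V_\beta \cap V_{\beta'})$, which is precisely the overlap of the two product domains.

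The main obstacle I anticipate is the chart--membership step rather than the algebra of the transition maps: one must confirm that $U_1^\alpha \times V_1^\beta$ genuinely belongs to the $K_1$ chosen for the product, i.e. that it lies in $\bigcup_{r, s \in C}(\tau_\mu)_r^s$ for the product $\mu$--topology. This requires relating $\tau_\mu$ on $M_1 \times M_2$ to the factor topologies $\tau_{\mu_1}$ and $\tau_{\mu_2}$ and checking that a product of generators of $(\tau_{\mu_1})_{r_1}^{s_1}$ and $(\tau_{\mu_2})_{r_2}^{s_2}$ is a generator, or a union of generators, of $(\tau_\mu)_r^s$. Once this identification is secured, the remaining verifications are routine, being products of data already supplied by the hypotheses on $D_1$ and $D_2$.
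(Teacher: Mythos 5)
Your proposal is correct and, on the two axioms b1) and b2), follows essentially the same componentwise strategy as the paper: reduce $\mu(M_1\times M_2)=\mu(\bigcup(U_\alpha\times V_\beta))$ to b1) for each factor, split $\mu(U_\alpha\times V_\beta)=\mu(U_\sigma\times V_\delta)$ into the two coordinate equalities, invoke b2) for $D_1$ and $D_2$ to get $(\xi_0,\xi)$ and $(\eta_0,\eta)$, and check that $h_0=\xi_0\times\eta_0$, $h=\xi\times\eta$ make the product square commute. Where you genuinely go beyond the paper is in the preliminary bookkeeping: the paper's proof never verifies that $(U_\alpha\times V_\beta,\phi_\alpha\times\psi_\beta)$ is a chart in the sense of Definition 2.1 at all --- neither the decomposition $U_\alpha\times V_\beta=(U_1^\alpha\times V_1^\beta)\cap\mu^{-1}(r,s)$ placing the product domain in a suitable $K$, nor that $\phi_\alpha(U_\alpha)\times\psi_\beta(V_\beta)$ is a $C^n$ Banach manifold --- and it also omits the clause of b2) requiring $h_0$ to restrict to the identity on the overlap, which you correctly reduce to $(U_\alpha\cap U_\sigma)\times(V_\beta\cap V_\delta)$ and handle factorwise. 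The obstacle you flag at the end is real: membership of $U_1^\alpha\times V_1^\beta$ in a $K_1$ built from $\bigcup_{r,s\in C}(\tau_\mu)_r^s$ for the product observer requires relating $\tau_\mu$ on $M_1\times M_2$ to $\tau_{\mu_1}$ and $\tau_{\mu_2}$, and the paper is silent on this point as well (it simply asserts $\varOmega$ consists of charts), so your proof is at least as complete as the published one and more honest about where the remaining work lies. One small point in your favor worth keeping: your observation that equality of nonempty product sets forces componentwise equality is needed to make rigorous the paper's bare assertion that $\mu(U_\alpha\times V_\beta)=\mu(U_\sigma\times V_\delta)$ yields $\mu_1(U_\alpha)=\mu_1(U_\sigma)$ and $\mu_2(V_\beta)=\mu_2(V_\delta)$, since $\mu(U)$ is an image set rather than an ordered pair.
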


\begin{proof}
First, we prove
\begin{equation*}
\mu(M_1 \times M_2) = \mu (\bigcup_{\alpha, \beta \in \Lambda}(U_\alpha \times V_\beta)).
\end{equation*}
Let $(x, y) \in M_1 \times M_2$ be arbitrary. We have $\mu(x, y) = (\mu_1(x), \mu_2(y))$. Since
\begin{equation*}
\mu_1(x) \in \mu_1(\bigcup_{\alpha \in\Lambda}U_{\alpha}) \quad \& \quad \mu_2(y) \in \mu_2(\bigcup_{\beta \in\Lambda}V_{\beta});
\end{equation*}
then,
\begin{equation*}
\mu(x, y) \in \mu(\bigcup_{\alpha \in\Lambda}(U_\alpha \times V_\beta)).
\end{equation*}
Thus,
\begin{equation}\label{item: 1}
\mu(M_1 \times M_2) \subseteq \mu(\bigcup_{\alpha, \beta \in\Lambda}(U_\alpha \times V_\beta)).
\end{equation}
On the other hand, we have
\begin{equation}\label{item: 2}
\mu(\bigcup_{\alpha, \beta \in\Lambda}(U_\alpha \times V_\beta)) \subseteq \mu(M_1 \times M_2).
\end{equation}
Thus,
\begin{equation*}
\mu(M_1 \times M_2) = \mu(\bigcup_{\alpha, \beta \in\Lambda}(U_\alpha \times V_\beta)).
\end{equation*}
Now, we show the second axiom is satisfied. Let $(U_\alpha \times V_\beta,\phi_\alpha \times \psi_\beta), (U_\sigma \times V_\delta,\phi_\sigma \times \psi_\delta) \in K$ for which we have
\begin{equation*}
\mu(U_\alpha \times V_\beta) = \mu(U_\sigma \times V_\delta);
\end{equation*}
then, we obtain
\begin{equation*}
(\mu_1(U_\alpha), \mu_2 (V_\beta)) = (\mu_1(U_\sigma), \mu_2 (V_\delta)).
\end{equation*}
Since $\mu_1(U_\alpha) = \mu_1(U_\sigma)$, there exists a one to one and onto mapping $\xi_0: U_\alpha\longrightarrow U_\sigma$ and a $C^n$ diffeomorphism $\xi: \phi_\alpha(U_\alpha) \longrightarrow \phi_\sigma(U_\sigma)$ such that
\begin{equation}\label{item: 3}
\xi o\phi_\alpha = \phi_\sigma o \xi_0.
\end{equation}
Since $\mu_2(V_\beta) = \mu_2(V_\delta)$, then
\begin{equation}\label{item: 4}
\eta o\psi_\beta = \psi_\delta o \eta_0;
\end{equation}
for some one to one and onto mapping $\eta_0: V_\beta \longrightarrow V_\delta$ and some $C^n$ diffeomorphism\\ $\eta: \psi_\beta(V_\beta)\longrightarrow\psi_\delta(V_\delta)$.
Now, we define
\begin{equation*}
\xi_0 \times \eta_0: U_\alpha \times V_\beta \longrightarrow U_\sigma \times V_\delta
\end{equation*}
\begin{equation*}
(\xi_0 \times \eta_0)(u, v) = (\xi_0(u), \eta_0(v)).
\end{equation*}
The bijectivity of $\xi_0$ and $\eta_0$ implies to the bijectivity of $\xi_0 \times \eta_0$.

Also, we define
\begin{equation*}
\xi \times \eta: \phi_\alpha(U_\alpha) \times \psi_\beta(V_\beta) \longrightarrow \phi_\sigma(U_\sigma) \times \psi_\delta(V_\delta)
\end{equation*}
\begin{equation*}
(\xi \times \eta)(x, y) = (\xi(x), \eta(y)).
\end{equation*}
Notice that $\xi \times \eta$ is a diffeomorphism since $\xi$ and $\eta$ are diffeomorphisms.

Finally, we show that the following diagram commutes.

\[\xymatrix{
U_\alpha\times V_\beta \ar[d]_{\phi_\alpha\times\psi_\beta} \ar[r]^{\xi_0\times \eta_0} & U_\sigma\times V_\delta \ar[d]^{\phi_\sigma\times\psi_\delta} \\
\phi_\alpha(U_\alpha)\times \psi_\beta(V_\beta) \ar[r]_{\xi\times\eta} &  \phi_\sigma(U_\sigma)\times \psi_\delta(V_\delta)}
\]

In fact, for arbitrary $(u, v) \in U_\alpha \times V_\beta$, we have
\begin{align*}
[(\xi \times \eta)o(\phi_\alpha \times \psi_\beta)](u, v) &= (\xi \times \eta)(\phi_\alpha(u), \psi_\beta(v)) \\&= (\xi(\phi_\alpha(u)), \eta(\psi_\beta(v))) \\&= ((\phi_\sigma o \xi_0)(u), (\psi_\delta o\eta_0)(v)) \\& = (\phi_\sigma \times \psi_\delta)(\xi_0(u), \eta_0(v)) \\& = [(\phi_\sigma \times \psi_\delta) o (\xi_0 \times \eta_0)](u, v). \qedhere
\end{align*}
\end{proof}

By use of the above theorem and induction, we deduce the following theorem:
\begin{theorem}
Let $(M_i, D_i)$ be $C^n$ $\mu_i$--selective Banach manifold for $i \in \{1, \ldots, m\}$. Then, $(\prod_{j=1}^mM_j, \varOmega)$ is a $C^n$ $\mu$--selective Banach Manifold, where
\begin{equation*}
\varOmega = \{(\prod_{j=1}^mU_{\alpha_j}, \prod_{j=1}^m\phi_{\alpha_j})\mid (U_{\alpha_j}, \phi_{\alpha_j}) \in D_j, \quad j=1, \ldots, m\};
\end{equation*}
\begin{equation*}
\prod_{j=1}^m\phi_{\alpha_j}:\prod_{j=1}^mU_{\alpha_j} \longrightarrow (\prod_{j=1}^m\phi_{\alpha_j})(\prod_{j=1}^mU_{\alpha_j}) \quad \text{ with } \quad
(\prod_{j=1}^m\phi_{\alpha_j})(x_{\alpha_1}, \ldots, x_{\alpha_m}) = \prod_{j=1}^m\phi_{\alpha_j}(x_{\alpha_j});
\end{equation*}
and
\begin{equation*}
\mu: \prod_{j=1}^mM_j \longrightarrow \prod_{j=1}^mI_j \quad \text{ with } \quad \mu(x_1, \ldots, x_m) = \prod_{j=1}^m\mu_j(x_j);
\end{equation*}
whenever $I_j = [0, 1]$ for all $j \in \{1, \ldots, m\}$.
\end{theorem}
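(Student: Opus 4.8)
The plan is to prove the statement by induction on the number of factors $m$, using Theorem \ref{item: 1.5} to absorb one factor at a time. The base case $m=1$ is immediate, and the case $m=2$ is exactly Theorem \ref{item: 1.5}. For the inductive step I would assume the assertion for $m-1$ factors, so that $N = \prod_{j=1}^{m-1}M_j$ carries the $C^n$ $\nu$--structure
\begin{equation*}
\varOmega_{m-1} = \{(\prod_{j=1}^{m-1}U_{\alpha_j}, \prod_{j=1}^{m-1}\phi_{\alpha_j}) \mid (U_{\alpha_j}, \phi_{\alpha_j}) \in D_j\},
\end{equation*}
with observer $\nu(x_1, \ldots, x_{m-1}) = \prod_{j=1}^{m-1}\mu_j(x_j)$, and is therefore itself a $C^n$ $\nu$--selective Banach manifold to which the two--factor theorem may be applied.

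Next I would introduce the canonical bijection
\begin{equation*}
\Theta: \prod_{j=1}^m M_j \longrightarrow N \times M_m, \qquad \Theta(x_1, \ldots, x_m) = ((x_1, \ldots, x_{m-1}), x_m),
\end{equation*}
and observe that under $\Theta$ the observer $\mu$ of the present theorem corresponds to the product observer $(x, y) \mapsto (\nu(x), \mu_m(y))$ assigned to $N$ and $M_m$ by Theorem \ref{item: 1.5}. Applying that theorem to the pair $(N, \varOmega_{m-1})$ and $(M_m, D_m)$ yields a $C^n$ selective Banach structure on $N \times M_m$ whose charts are the pairs $((\prod_{j=1}^{m-1}U_{\alpha_j}) \times V_{\alpha_m}, (\prod_{j=1}^{m-1}\phi_{\alpha_j}) \times \psi_{\alpha_m})$. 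Transporting this structure back along $\Theta$ and rewriting the product of the $(m-1)$--fold chart with one further chart as a single $m$--fold product identifies it precisely with $\varOmega$; in particular $\prod_{j=1}^m\phi_{\alpha_j}$ agrees with $(\prod_{j=1}^{m-1}\phi_{\alpha_j}) \times \psi_{\alpha_m}$ after the identification, and the fact that the product of two $C^n$ diffeomorphisms is again a $C^n$ diffeomorphism is exactly what was verified in the commuting--diagram step of Theorem \ref{item: 1.5}.

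This shows that $(\prod_{j=1}^m M_j, \varOmega)$ inherits both structure axioms from the two--factor case, which closes the induction. The only genuine work is bookkeeping: one must check that $\Theta$ carries axioms (b1) and (b2) for the binary product $N \times M_m$ onto the corresponding axioms for the $m$--fold product, that is, that the associativity of the Cartesian product is compatible with the way charts and observers are multiplied. I expect this associativity and reindexing verification to be the main (though routine) obstacle, precisely because the theorem presents $\varOmega$ directly as an $m$--fold product rather than as an iterated binary product, so the two descriptions must be matched term by term.
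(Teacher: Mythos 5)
Your proposal is correct and matches the paper's approach exactly: the paper itself states that the $m$-fold product theorem is deduced ``by use of the above theorem and induction,'' i.e., by iterating Theorem \ref{item: 1.5} just as you do. Your write-up is in fact more careful than the paper's (which omits the proof entirely), since you make explicit the associativity identification via $\Theta$ and the matching of the $m$-fold observer with the iterated binary one.
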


\section{The $\alpha$--level differentiation of maps between selective Banach manifolds}

In this section, the concept of the $(r, \alpha)$--differentiation of the mappings between selective Banach manifolds is presented, and its characteristics are studied. Specifically, a new version of chain rule theorem for the composition of mappings between selective Banach manifolds is proved.

Let $(M_i, D_{M_i})$ be a $C^n$ $\mu_i$--selective Banach manifold that is modeled over $E_i$ for $i \in \{1, 2\}$. For every $\alpha \in [0, 1]$, and every mapping $f: M_1\longrightarrow M_2$, put
\begin{equation*}
K_{f, \alpha}=\{(U, \phi; V, \psi) \mid (U, \phi) \in D_{M_1} \quad \& \quad (V, \psi) \in D_{M_2} \quad \& \quad f(U) \subseteq V \quad \& \quad \mu(U)=\gamma(V)=\alpha\};
\end{equation*}
where $D_{M_i}$ is a $C^n$ $\mu_i$--structure on $M_i$, and $E_i$ is a Banach space for $i\in\{1, 2\}$.
\begin{definition}\label{3 1}
Let the mapping $f: M_1 \longrightarrow M_2$ be as above, $p \in M_1$, and $r \geq 0$. We say that $f$ is $(r, \alpha)$--differentiable at $p$ if the following conditions are satisfied:
\begin{enumerate}[label=c\arabic*),itemindent=1cm,ref=c\arabic*]
\item $K_{f, \alpha} \neq \emptyset$;
\item $\mu_2 of = \mu_1$; \label{a2}
\item if $(V, \psi) \in D_{M_2}$ and $\mu_2(V)=\alpha$, then $\gamma(V)=\mu_1(f^{-1}(V))$; \label{a3}
\item if $(U, \phi; V, \psi) \in K_{f, \alpha}$, then the mapping $\psi o f o \phi^{-1}: \label{a4} \phi(U)\longrightarrow\psi(V)$ is a $C^r$--map in a neighborhood of $\phi(p)$.
\end{enumerate}
\end{definition}

Condition (\ref{a4}) of the above definition implies that the definition of the $(r, \alpha)$--differentiable map is independent of the choice of the charts.
\begin{definition}
We say that the mapping $f$ is continuous at $p$ if $\psi o f o \phi^{-1}$ is a continuous map at $\phi(p)$.
\end{definition}

\begin{remark}
If $f$ is $(r+1, \alpha)$--differentiable, then $f$ is $(r, \alpha)$--differentiable.
\end{remark}

\begin{definition}
We say that the mapping $f$ is $(r, \alpha)$--differentiable on $M$ if $f$ is $(r, \alpha)$--differentiable at every $p \in M$. If $r = \infty$, then we say that $f$ is $\alpha$--smooth.
\end{definition}

\begin{theorem}
Let $(M, D_M)$ be a $C^n$ $\mu$--selective Banach manifold. Then, the identity mapping $id: M\longrightarrow M$ is $(r, \alpha)$--differentiable for all $\alpha \in Im(\mu)$, where $id(m) = m$ for all $m \in M$.
\end{theorem}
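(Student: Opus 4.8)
The plan is to verify directly the four requirements of Definition \ref{3 1} for the map $f = id$, specialized to $M_1 = M_2 = M$, $\mu_1 = \mu_2 = \gamma = \mu$, $D_{M_1} = D_{M_2} = D_M$. Three of the conditions are essentially immediate, and the entire weight of the argument falls on the chart-compatibility condition, which I would reduce to the second axiom of a $C^n$ $\mu$--structure.

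For the nonemptiness of $K_{id, \alpha}$, I would use the hypothesis $\alpha \in Im(\mu)$ together with the $\mu$--atlas property to produce a chart $(U, \phi) \in D_M$ with $\mu(U) = \alpha$; then the quadruple $(U, \phi; U, \phi)$ lies in $K_{id, \alpha}$, since $id(U) = U \subseteq U$ and $\mu(U) = \gamma(U) = \alpha$. The second condition is trivial, because $\mu \circ id = \mu$ holds pointwise. The third is equally direct: for $f = id$ one has $f^{-1}(V) = V$, whence $\mu(f^{-1}(V)) = \mu(V) = \alpha = \gamma(V)$ for every chart $(V, \psi)$ with $\mu(V) = \alpha$.

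The key step is the smoothness of the chart representative. Given $(U, \phi; V, \psi) \in K_{id, \alpha}$, one has $U \subseteq V$ and $\mu(U) = \mu(V) = \alpha$, so the second axiom of the $\mu$--structure applies and furnishes a bijection $h_0 : U \to V$ and a $C^n$ Banach diffeomorphism $h : \phi(U) \to \psi(V)$ with $h \circ \phi = \psi \circ h_0$ and $h_0|_{U \cap V} = id$. Since $U \subseteq V$ gives $U \cap V = U$, the restriction condition forces $h_0 = id_U$ (and in fact $U = V$, as $h_0$ is onto). Substituting this into $h \circ \phi = \psi \circ h_0$ yields $\psi \circ id \circ \phi^{-1} = \psi \circ \phi^{-1} = h$ on $\phi(U)$; as $h$ is a $C^n$ diffeomorphism it is in particular of class $C^r$ (for $r \leq n$), and because it is $C^n$ on all of $\phi(U)$ the conclusion holds on a neighbourhood of $\phi(p)$ for every $p \in U$. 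This establishes the fourth condition at every point and, by the remark following Definition \ref{3 1}, confirms that the value is independent of the chosen charts.

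I expect the only genuine obstacle to be the first condition: one must be sure that $\alpha \in Im(\mu)$ really yields a chart on which $\mu$ is constantly equal to $\alpha$, that is, a chart $(U, \phi) \in D_M$ with $\mu(U) = \alpha$ rather than merely a chart meeting the level set $\mu^{-1}(\alpha)$. I would settle this by appealing to the construction of charts as $U = U_1 \cap \mu^{-1}(r, s)$ together with the defining property $\mu(M \smallsetminus \bigcup_{(U, \phi) \in A} U) = \{0\}$ of a $C^n$ selective Banach manifold. Once this existence is secured, the remaining verifications are purely formal.
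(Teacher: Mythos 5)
Your proposal is correct, and on the decisive point it is strictly more careful than the paper's own proof. For the first three conditions the two arguments coincide: both witness $K_{id,\alpha}\neq\emptyset$ by a chart $(U,\phi)$ with $\mu(U)=\alpha$ paired with itself, and both dispatch $\mu\circ id=\mu$ and $\mu(id^{-1}(V))=\mu(V)$ by inspection. The difference is in condition (\ref{a4}): the paper verifies the chart representative only for the diagonal pair, observing that $\phi\circ id\circ\phi^{-1}$ is the identity of $\phi(U)$ and hence smooth, whereas the condition quantifies over \emph{all} $(U,\phi;V,\psi)\in K_{id,\alpha}$, about which the paper says nothing. You close exactly this gap by invoking axiom b2 of the $\mu$--structure: since $id(U)\subseteq V$ gives $U\cap V=U$, the restriction clause forces $h_0=id_U$, surjectivity of $h_0$ then gives $U=V$, and $h\circ\phi=\psi\circ h_0$ identifies the representative $\psi\circ\phi^{-1}$ with the $C^n$ diffeomorphism $h$. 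This buys two things the paper's argument does not deliver: verification for arbitrary admissible chart pairs (together with the nontrivial byproduct that every pair in $K_{id,\alpha}$ has $U=V$), and the correct observation that the general case requires $r\leq n$ --- a constraint the paper's diagonal check silently hides, since the identity of a Banach space is $C^\infty$ for every $r$.

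One caveat you rightly flag is shared by both proofs and is not actually resolved by your proposed fix: $\alpha\in Im(\mu)$ plus axiom b1 only yields a chart \emph{meeting} the level set $\mu^{-1}(\alpha)$, and the form $U=U_1\cap\mu^{-1}(r,s)$ makes $\mu$ take values strictly between $r$ and $s$ on $U$, not constantly equal to $\alpha$. The paper simply assumes such a chart exists (``Suppose that $(U,\phi)\in D_M$ \dots\ and $\mu(U)=\alpha$''), and indeed the definition of $K_{f,\alpha}$ already treats $\mu(U)=\alpha$ as an equality of a set with a point, so the framework tacitly presupposes charts on which $\mu$ is constant; your proof is therefore no worse off than the paper's on this point, but you should state it as a standing assumption rather than claim it follows from the selective-manifold property $\mu(M\smallsetminus\bigcup_{(U,\phi)\in A}U)=\{0\}$, which controls only the complement of the charted region.
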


\begin{proof}
Let $\alpha \in Im(\mu)$. Suppose that $(U, \phi) \in D_M$ for all $m \in M$, where $m \in U$, and $\mu(U) = \alpha$, then $id(U) = U$. Thus, $K_{id, \alpha} \neq \phi$ and $\mu o (id) = \mu$. Let $(V, \psi) \in D_M$, and $\mu(V) = \alpha$. Since $(id)^{-1}(V) = V$, then $\mu(V) = \mu((id)^{-1}(V)$. Finally, the mapping $\phi o (id) o \phi^{-1}: \phi(U)\longrightarrow \phi(U)$ is differentiable since it is the identity mapping on a Banach space.
\end{proof}

\begin{theorem}\label{item: 3 6}
Let $(M_i, D_{M_i})$ is a $C^n$ $\mu_i$--selective Banach manifold for $i \in \{1, 2\}$, and\\ $f: M_1\longrightarrow M_2$ is a map. If $(V, \psi) \in D_{M_2}$, then there exists $(U, \phi) \in D_{M_1}$ such that $f^{-1}(V) = U$.
\end{theorem}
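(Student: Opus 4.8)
The plan is to take $U := f^{-1}(V)$ as the candidate chart domain and to manufacture the chart map by pulling $\psi$ back along $f$. Two things then have to be checked: that $U$ belongs to the collection $K$ of admissible domains on $M_1$ (so that it can legitimately be the first coordinate of a chart), and that $U$ carries a chart map compatible with the structure $D_{M_1}$, so that the resulting pair actually lies in $D_{M_1}$. Throughout I would read $f$ as an $(r,\alpha)$--differentiable map, so that in particular the observer--compatibility relation $\mu_2 \circ f = \mu_1$ is available; without some such hypothesis the bare word ``map'' is not enough to force $f^{-1}(V)$ to be a chart domain.

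First I would unfold $(V, \psi) \in D_{M_2}$ by the definition of a chart: $V = V_1 \cap \mu_2^{-1}(r, s)$ for some $V_1 \in K_1$ on $M_2$ and constant observers $r < s$, with $\psi$ one--to--one and $\psi(V)$ a $C^n$ Banach manifold. Taking preimages gives $U = f^{-1}(V_1) \cap f^{-1}(\mu_2^{-1}(r,s))$. The relation $\mu_2 \circ f = \mu_1$ identifies the second factor, since $x \in f^{-1}(\mu_2^{-1}(r,s))$ iff $r < \mu_2(f(x)) < s$ iff $r < \mu_1(x) < s$, so that $f^{-1}(\mu_2^{-1}(r,s)) = \mu_1^{-1}(r,s)$. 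Continuity of $f$ with respect to the generated topologies $(\tau_{\mu_2})_r^s$ and $(\tau_{\mu_1})_r^s$ sends the basic element $V_1$ to $f^{-1}(V_1) \in K_1$ on $M_1$. Hence $U = f^{-1}(V_1) \cap \mu_1^{-1}(r,s)$ has exactly the shape required for membership in $K$ on $M_1$. I would then set $\phi := \psi \circ (f|_U)\colon U \to \psi(V)$, note that $\phi(U) = \psi(f(U))$ is a piece of the $C^n$ Banach manifold $\psi(V)$, and that $\phi$ is one--to--one because $\psi$ is injective and $f$ is injective on $U$. Finally, since $D_{M_1}$ is a (maximal) $C^n$ $\mu_1$--structure, it suffices to verify axiom (b2) between $(U, \phi)$ and any chart of $D_{M_1}$ at the same observer level, and the required bijection and $C^n$ diffeomorphism are obtained by transporting, through $f$, the transition data already guaranteed for $(V, \psi)$ inside $D_{M_2}$; maximality then forces $(U,\phi) \in D_{M_1}$.

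The step I expect to be the genuine obstacle is the behaviour of $f$ on $U$: nothing in the plain statement guarantees that $f|_U$ is one--to--one or that $f(U)$ is open in $V$, so $\phi = \psi \circ f$ need not be a bona fide chart map on its own. The clean way around this is to use that $f$ is $(r,\alpha)$--differentiable, so that $\psi \circ f \circ \phi^{-1}$ is $C^r$ and the level condition $\gamma(V) = \mu_1(f^{-1}(V))$ holds, and then to pull the $C^n$ compatibility (b2) of $M_2$ back across $f$. This is where the argument must genuinely invoke the selective--manifold axioms rather than mere set--theoretic preimage bookkeeping, and it is the part of the proof that needs the most care.
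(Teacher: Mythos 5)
Your first half is, almost verbatim, the paper's entire proof. The paper unfolds $V = V_1 \cap \mu_2^{-1}(r,s)$ with $V_1 = \lambda^{-1}(r_0,s_0)$, $\lambda \in \tau_{\mu_2}$, and computes $f^{-1}(V) = f^{-1}(V_1) \cap (\mu_2\circ f)^{-1}(r',s') = f^{-1}(V_1) \cap \mu_1^{-1}(r',s')$ with $r' = r\circ f$, $s' = s\circ f$, and $f^{-1}(V_1) = \beta^{-1}(m_0,n_0)$ where $\beta = \lambda\circ f$, $m_0 = r_0\circ f$, $n_0 = s_0\circ f$; it then argues $\beta \in \tau_{\mu_1}$ from $\lambda \subseteq \mu_2$. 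Your observation that the bare hypothesis ``map'' is insufficient is correct and well spotted: the paper's step from $(\mu_2\circ f)^{-1}(r',s')$ to $\mu_1^{-1}(r',s')$ silently uses exactly the identity $\mu_2\circ f = \mu_1$ that you chose to read in. One local difference: where you invoke continuity of $f$ relative to the generated topologies to get $f^{-1}(V_1) \in K_1$, the paper needs no topological hypothesis --- it pulls the observer itself back along $f$, which is cleaner, since no continuity of $f$ is available. (Both you and the paper gloss over whether the pulled-back set actually lies in the chosen family $K_1$ on $M_1$, which by Definition 2.1 is a fixed, arbitrary subfamily; neither argument addresses this.)

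Where you diverge is your second half, and there the honest answer is: the paper does not do that step at all. It never constructs a chart map $\phi$ on $U = f^{-1}(V)$, never verifies axiom (b2), and never exhibits $(U,\phi) \in D_{M_1}$; the proof ends once $f^{-1}(V)$ is shown to have the admissible shape $\beta^{-1}(m_0,n_0)\cap\mu_1^{-1}(r',s')$, implicitly reading $D_{M_1}$ as containing every chart whose domain lies in $K$. Your attempt to supply the missing chart map via $\phi := \psi\circ(f|_U)$ stalls exactly where you predicted, and the escape route you sketch does not work: $(r,\alpha)$--differentiability yields that $\psi\circ f\circ\phi^{-1}$ is $C^r$ and that $\mu_1(f^{-1}(V)) = \gamma(V)$, but nothing in Definition 3.1 forces $f|_U$ to be injective (a constant $(r,\alpha)$--differentiable map, as in the paper's Theorem 4.13, already defeats it), so $\psi\circ f|_U$ can fail to be one-to-one and hence fail to be a chart map; moreover the appeal to maximality presupposes the very (b2)--compatibility you are trying to establish, and the theorem's hypotheses do not even grant that $D_{M_1}$ is maximal. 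So your proposal is the paper's argument plus an extra, genuinely gapped step --- but it is only fair to note that the gap you exposed is one the paper avoids by simply not attempting the construction, not by closing it.
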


\begin{proof}
Let $(V, \psi) \in D_{M_2}$, then $V = V_1 \bigcap \mu_2^{-1}(r, s)$, where $r, s$ are two constant observers on $M_2$, and $V_1 = \lambda^{-1}(r_0, s_0)$, whenever $r_0$ and $s_0$ are two constant observers on $M_2$, and $\lambda \in \tau_{\mu_2}$. Thus,
\begin{align*}
f^{-1}(V) & = f^{-1}(V_1) \bigcap f^{-1}(\mu_2^{-1}(r, s)) \\& = f^{-1}(V_1) \bigcap (\mu_2 o f)^{-1}(r', s') \\& = f^{-1}(V_1) \bigcap (\mu_1)^{-1}(r', s');
\end{align*}
where
\begin{align*}
f^{-1}(\mu_2^{-1}(r, s)) & = \{f^{-1}(x)\mid r(x) < \mu_2(x) < s(x)\quad \& \quad x \in M_2\} \\& = \{f^{-1}(x)\mid (r o f o f^{-1})(x) < (\mu_2 o f o f^{-1})(x) < (s o f o f^{-1})(x) \quad \& \quad x \in M_2\}.
\end{align*}
Put $rof=r'$, and $sof=s'$. Then, $r'$ and $s'$ are two constant observers on $M_1$.

We claim that $f^{-1}(V_1)=U_1$, where $U_1=\beta^{-1}(m_0, n_0)$, $\beta \in \tau_{\mu_1}$, and $m_0$ and $n_0$ are constant observers on $M_1$. Notice that
\begin{equation*}
f^{-1}(V_1) = f^{-1}(\lambda^{-1}(r_0, s_0)) = (\lambda o f)^{-1}(m_0, n_0);
\end{equation*}
where
\begin{align*}
 f^{-1}(\lambda^{-1}(r_0, s_0)) & = \{f^{-1}(x)\mid r_0(x)<\lambda(x)<s_0(x)\quad \& \quad x \in M_2\} \\& = \{f^{-1}(x)\mid (r_0 o f o f^{-1})(x)<(\lambda o f o f^{-1})(x)<(s_0 o f o f^{-1})(x) \quad \& \quad x \in M_2\}.
\end{align*}
Put $\beta=\lambda o f$, $m_0=r_0of$, and $n_0=s_0of$. Then, for constant observers $m_0$ and $n_0$ on $M_1$, we have
\begin{equation*}
f^{-1}(V_1)=\beta^{-1}(m_0, n_0).
\end{equation*}
Since $\lambda\subseteq\mu_2$, we get $\lambda o f \subset \mu_2 o f$. Thus, $\beta \in \tau_{\mu_1}$.
\end{proof}

Here, we establish chain rule Theorem for the composition of the $(r, \alpha)$--differentiable maps between $\mu_i$--selective Banach manifolds.

\begin{theorem}\label{item: 3 7}
Let $(M_i, D_{M_i})$ be a $C^n$ $\mu_i$--selective Banach manifold over a Banach space $E_i$ for $i\in \{1, 2, 3\}$, and $f: M_1\longrightarrow M_2$ and $g:M_2\longrightarrow M_3$ be two $(r, \alpha)$--differentiable maps. Then, the mapping $gof: M_1\longrightarrow M_3$ is $(r, \alpha)$--differentiable.
\end{theorem}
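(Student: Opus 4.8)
The plan is to verify the four defining conditions c1)--c4) of Definition \ref{3 1} for the composite $g\circ f$, deriving each from the corresponding condition for $f$ and for $g$. The essential device is Theorem \ref{item: 3 6}, which guarantees that the preimage of a chart is again (the domain of) a chart; this lets me manufacture, from a chart $(W,\chi)\in D_{M_3}$, a coherent intermediate chart $(V,\psi)\in D_{M_2}$ through which the local representation of $g\circ f$ factors, so that the classical chain rule for $C^r$ maps between open subsets of Banach spaces applies directly.

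Fix $\alpha$ and a point $p\in M_1$. By Theorem \ref{item: 3 6} applied to $g$, the set $V=g^{-1}(W)$ is the domain of a chart $(V,\psi)\in D_{M_2}$, and applying it to $f$, the set $U=f^{-1}(V)$ is the domain of a chart $(U,\phi)\in D_{M_1}$; moreover $(g\circ f)^{-1}(W)=f^{-1}(g^{-1}(W))=U$. For the levels, condition c3) for $g$ gives $\mu_2(V)=\mu_2(g^{-1}(W))=\mu_3(W)=\alpha$, and then condition c3) for $f$ gives $\mu_1(U)=\mu_1(f^{-1}(V))=\mu_2(V)=\alpha$; this same chain of equalities simultaneously verifies c3) for $g\circ f$. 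Since $f(U)=f(f^{-1}(V))\subseteq V$ and $g(V)=g(g^{-1}(W))\subseteq W$, we get $(g\circ f)(U)\subseteq W$ with $\mu_1(U)=\mu_3(W)=\alpha$, so $(U,\phi;W,\chi)\in K_{g\circ f,\alpha}$ and c1) holds. Condition c2) is immediate from c2) for the two factors: $\mu_3\circ(g\circ f)=(\mu_3\circ g)\circ f=\mu_2\circ f=\mu_1$.

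The remaining condition c4) is the heart of the argument. Given any $(U,\phi;W,\chi)\in K_{g\circ f,\alpha}$, I reproduce the intermediate chart $V=g^{-1}(W)$ via Theorem \ref{item: 3 6}; the inclusions $f(U)\subseteq V$ and $g(V)\subseteq W$ together with the level identities above show that $(U,\phi;V,\psi)\in K_{f,\alpha}$ and $(V,\psi;W,\chi)\in K_{g,\alpha}$. One then factors the local representation as
\begin{equation*}
\chi\circ(g\circ f)\circ\phi^{-1}=\bigl(\chi\circ g\circ\psi^{-1}\bigr)\circ\bigl(\psi\circ f\circ\phi^{-1}\bigr).
\end{equation*}
By c4) for $f$ the inner factor is a $C^r$ map near $\phi(p)$, and by c4) for $g$ the outer factor is a $C^r$ map near $\psi(f(p))=(\psi\circ f\circ\phi^{-1})(\phi(p))$. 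The classical chain rule for $C^r$ maps between open subsets of Banach spaces then shows the composite is $C^r$ near $\phi(p)$, which is exactly c4) for $g\circ f$.

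I expect the main obstacle to be organizational rather than analytic. The only analytic input is the standard Banach-space chain rule, which is routine; the delicate point is selecting the intermediate chart $(V,\psi)$ so that it is \emph{simultaneously} $f$-compatible with $(U,\phi)$ and $g$-compatible with $(W,\chi)$, and of the correct level $\alpha$. Theorem \ref{item: 3 6} is precisely what makes this selection canonical through the formula $V=g^{-1}(W)$: without it one could not guarantee the existence of an intermediate chart of level $\alpha$ satisfying both required inclusions, and the factorization underlying c4) would break down.
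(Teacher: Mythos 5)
Your proposal is correct and takes essentially the same route as the paper's own proof: verify conditions c1)--c4) of Definition \ref{3 1}, use Theorem \ref{item: 3 6} to produce the pulled-back chart, derive the level equalities $\mu_1(U)=\mu_2(V)=\mu_3(W)=\alpha$ and $\mu_3\circ(g\circ f)=\mu_1$ exactly as the paper does, factor the local representative as $\bigl(\chi\circ g\circ\psi^{-1}\bigr)\circ\bigl(\psi\circ f\circ\phi^{-1}\bigr)$, and conclude by the classical Banach-space chain rule. The only differences are cosmetic: you apply Theorem \ref{item: 3 6} twice (manufacturing $V=g^{-1}(W)$ rather than taking $(V,\tilde{\phi})$ and $(W,\tilde{\tilde{\phi}})$ from the differentiability data of $g$, as the paper does before applying the theorem once to get $U=f^{-1}(V)$), and your verification of c4) for an \emph{arbitrary} element of $K_{g\circ f,\alpha}$ is, if anything, slightly more careful than the paper's check for its specific chosen charts.
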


\begin{proof}
Let $p \in M_1$ be given. Since $g: M_2\longrightarrow M_3$ is $(r, \alpha)$--differentiable, we can choose the charts $(V, \tilde{\phi}) \in D_{M_2}$ and $(W, \tilde{\tilde{\phi}}) \in D_{M_3}$ such that $g(f(p)) \in W$, $f(p) \in V$, $g(V) \subseteq W$, and
\begin{equation}\label{5}
\mu_2(V)=\mu_3(W)=\alpha.
\end{equation}
It follows from Theorem \ref{item: 3 6} that for $(V, \tilde{\phi}) \in D_{M_2}$ there exists $(U, \phi) \in D_{M_1}$ such that $f^{-1}(V)=U$. Since $f: M_1\longrightarrow M_2$ is $(r, \alpha)$--differentiable at $p$, we have $\alpha=\mu_2(V)=\mu_1(f^{-1}(V))$. Thus, $\alpha=\mu_2(V)=\mu_1(U)$. Now, Equation (\ref{5}) implies that $\alpha=\mu_1(U)=\mu_3(W)$.

Also, we have
\begin{equation*}
(gof)(U)=g(f(U))=g(V)\subseteq W;
\end{equation*}
\begin{align*}
\mu_1((gof)^{-1}(W)) &=\mu_1(f^{-1}(g^{-1}(W))) =\mu_1(f^{-1}(V)) \\&=\mu_2(V) =\mu_2(g^{-1}(W)) = \mu_3(W);
\end{align*}
and
\begin{equation*}
\mu_3o(gof)=(\mu_3og)of=\mu_2of=\mu_1.
\end{equation*}

Finally, the mapping
\begin{equation*}
\tilde{\tilde{\phi}}o(gof)o\phi^{-1}=(\tilde{\tilde{\phi}}ogo\tilde{\phi}^{-1})o(\tilde{\phi}ofo\phi^{-1}): \phi(U)\longrightarrow \tilde{\tilde{\phi}}(W)
\end{equation*}
is a $C^r$--map at $\phi(p)$, since $\tilde{\tilde{\phi}}ogo\tilde{\phi}^{-1}$ and $\tilde{\phi}ofo\phi^{-1}$ are $C^r$--maps.
\end{proof}

Let $(M_i, D_{M_i})$ be a $C^n$ $\mu_i$--selective Banach manifold over a Banach space $E_i$ for $i \in \{1, 2\}$.

\begin{definition}\label{item:3.1.}
Suppose that $f: M_1\longrightarrow M_2$ is an $(r, \alpha)$--differentiable map. $f$ is called $(r, \alpha)$--diffeomorphism if $f$ is bijective, and $f^{-1}:M_2 \rightarrow M_1$ is also $(r, \alpha)$--differentiable.
\end{definition}

\begin{theorem}
Let $(M_i, D_{M_i})$ be a $C^n$ $\mu_i$--selective Banach manifold over a Banach space $E_i$ for $i\in \{1, 2, 3\}$, $f: M_1\longrightarrow M_2$ be an $\alpha$--smooth diffeomorphism, and $g:M_2\longrightarrow M_3$ be any map. Then, $g$ is $\alpha$--smooth if and only if $gof$ is $\alpha$--smooth.
\end{theorem}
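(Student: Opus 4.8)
The plan is to reduce both implications to the chain rule for $(r,\alpha)$--differentiable maps (Theorem \ref{item: 3 7}), together with the observation that $\alpha$--smoothness means precisely $(r,\alpha)$--differentiability for every $r \geq 0$. In that sense the whole argument is really two applications of the chain rule, one in each direction, with the bijectivity of $f$ supplying the algebraic identities that let me rewrite the composites.

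First I would treat the forward implication. Suppose $g$ is $\alpha$--smooth. Since $f$ is an $\alpha$--smooth diffeomorphism, in particular $f$ itself is $\alpha$--smooth, hence $(r,\alpha)$--differentiable for every $r \geq 0$; the same holds for $g$. Fixing an arbitrary $r$ and applying Theorem \ref{item: 3 7} to the pair $f, g$ shows that $g \circ f$ is $(r,\alpha)$--differentiable. As $r$ was arbitrary, $g \circ f$ is $\alpha$--smooth.

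For the converse, suppose $g \circ f$ is $\alpha$--smooth. Here I would invoke Definition \ref{item:3.1.}: because $f$ is an $\alpha$--smooth diffeomorphism, its inverse $f^{-1}: M_2 \longrightarrow M_1$ is itself $\alpha$--smooth, and the bijectivity of $f$ yields the set--theoretic identity $g = (g \circ f) \circ f^{-1}$. Applying Theorem \ref{item: 3 7} to the two $(r,\alpha)$--differentiable maps $f^{-1}$ and $g \circ f$, for each fixed $r$, shows that $(g \circ f) \circ f^{-1}$, that is $g$, is $(r,\alpha)$--differentiable; letting $r$ range over all $r \geq 0$ gives that $g$ is $\alpha$--smooth.

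I expect the only point needing genuine care to be a bookkeeping one about levels: Theorem \ref{item: 3 7} is stated for a single common level $\alpha$ shared by the three manifolds involved, so in each application I must verify that the charts arising in the chain--rule construction sit at the same level $\alpha$ on all three spaces. In the forward direction this is immediate, since $f$ and $g$ are assumed $(r,\alpha)$--differentiable at one and the same $\alpha$. In the backward direction the delicate step is confirming that $f^{-1}$ is $(r,\alpha)$--differentiable at the \emph{same} $\alpha$ as $g \circ f$; this is exactly where the hypothesis that $f$ is an $\alpha$--smooth \emph{diffeomorphism} (rather than merely smooth, or a diffeomorphism at some other level) does the essential work, via the level--matching built into conditions (\ref{a2}) and (\ref{a3}) of Definition \ref{3 1}.
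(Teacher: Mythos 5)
Your proof is correct, and your forward implication coincides with the paper's, which likewise disposes of that direction by a single appeal to Theorem \ref{item: 3 7}. For the converse, however, you take a genuinely different and more economical route: you factor $g=(g\circ f)\circ f^{-1}$ and invoke Theorem \ref{item: 3 7} once more, using that $f^{-1}$ is itself $\alpha$--smooth by Definition \ref{item:3.1.}. The paper does not cite the chain rule here; instead it re-verifies the four conditions of Definition \ref{3 1} for $g$ by hand. It establishes $K_{g,\alpha}\neq\emptyset$ by applying Theorem \ref{item: 3 6} to $f^{-1}$ to produce a chart $(V,\psi)\in D_{M_2}$ with $f(U)=V$, $g(V)\subseteq W$ and $\mu_2(V)=\alpha$; it derives condition (\ref{a2}), namely $\mu_3\circ g=\mu_2$, from $\mu_3\circ(g\circ f)=\mu_1=\mu_2\circ f$ and invertibility of $f$; it checks the level condition (\ref{a3}) via $\mu_2=\mu_1\circ f^{-1}$; and for (\ref{a4}) it writes $\eta\circ g\circ\psi^{-1}=\left[\eta\circ(g\circ f)\circ\phi^{-1}\right]\circ\left(\phi\circ f^{-1}\circ\psi^{-1}\right)$ --- which is exactly the coordinate form of your factorization $g=(g\circ f)\circ f^{-1}$. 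So your argument packages the paper's chartwise computation into one citation of the already-proved chain rule, while the paper's longer version buys an explicit record of the chart and level bookkeeping --- precisely the delicate point you flag in your final paragraph, which your version delegates to the internals of Theorem \ref{item: 3 7}. One small streamlining available to you: since the paper itself applies Theorem \ref{item: 3 7} at $r=\infty$ in the forward direction, you may invoke it directly for $\alpha$--smooth maps rather than quantifying over finite $r$ and passing to the limit, which also avoids the minor subtlety that the neighborhoods in condition (\ref{a4}) could a priori shrink as $r$ grows.
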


\begin{proof}
If $g:M_2\longrightarrow M_3$ is $\alpha$--smooth, then $gof$ is $\alpha$--smooth by Theorem \ref{item: 3 7}.

Now, suppose that $gof: M_1 \longrightarrow M_3$ be $\alpha$--smooth, and $q$ be any point in $M_2$. Since\\ $f: M_1\longrightarrow M_2$ is surjective, there exists $p \in M_1$ such that $f(p)=q$. Put $g(q)=m$. It follows from $\alpha$--smoothness of $g o f$ that $K_{gof, \alpha}\neq \phi$. Thus, there exists $(U,\phi; W, \eta) \in K_{gof, \alpha}$ such that $(U, \phi) \in D_{M_1}$ around $p$, and $(W, \eta) \in D_{M_3}$ around $m$, where $(gof)(U) \subseteq W$, and $\mu_1(U)=\mu_3(W)=\alpha$.

Considering Theorem \ref{item: 3 6}, for the mapping $f^{-1}: M_2\longrightarrow M_1$, and for the chart $(U, \phi) \in D_{M_1}$, $(f^{-1})^{-1}(U)=V \text{ i.e. } f(U)=V$ for some chart $(V, \psi) \in D_{M_2}$ around $q \in M_2$. Thus,
\begin{equation*}
g(V)=g(f(U))=(gof)(U)\subseteq W.
\end{equation*}

Since $f^{-1}: M_2\longrightarrow M_1$ is $\alpha$--smooth, for the chart $(U, \phi) \in D_{M_1}$ with $\mu_1(U)=\alpha$, we have
\begin{equation*}
\mu_1(U)=\mu_2((f^{-1})^{-1}(U))=\mu_2(f(U))=\mu_2(V).
\end{equation*}
Therefore, $K_{g, \alpha}\neq \emptyset$.

The $\alpha$--smoothness of $f: M_1\longrightarrow M_2$ and $gof: M_1\longrightarrow M_3$ result in $\mu_2 o f=\mu_1$ and $\mu_3 o(gof)=\mu_1$, respectively. Thus, $\mu_3 o(gof)=\mu_2 o f$. Since $f$ is invertible, $\mu_3 o g=\mu_2$. Therefore, Definition \ref{3 1} (\ref{a2}) is satisfied by $g: M_2\longrightarrow M_3$.

Let $(W, \eta) \in D_{M_3}$, and $\mu_3(W)=\alpha$. Considering $f: M_1\longrightarrow M_2$ is $\alpha$--smooth, we get
\begin{equation}\label{6}
\mu_2 o f=\mu_1.
\end{equation}
It follows from invertibility of $f$ and Equation (\ref{6}) that
\begin{equation}\label{7}
\mu_2=\mu_1 o f^{-1}.
\end{equation}
Now, considering $gof: M_1\longrightarrow M_3$ is $\alpha$--smooth, and using Equation (\ref{7}), we obtain
\begin{align*}
\alpha=\mu_3(W)=\mu_1((gof)^{-1}(W))= \mu_1(f^{-1}(g^{-1}(W)))=\mu_2(g^{-1}(W)).
\end{align*}
Therefore, Definition \ref{3 1} (\ref{a3})  is satisfied by $g: M_2\longrightarrow M_3$.

Finally, we inspect whether the Definition \ref{3 1} (\ref{a4}) is satisfied by $g: M_2\longrightarrow M_3$.
\begin{align*}
\eta o g o \psi^{-1} & =\eta o g o (f o f^{-1}) o \psi^{-1} \\& =(\eta o g o f)o(\phi^{-1}o\phi)o(f^{-1}o\psi^{-1}) \\& =[\eta o (gof) o \phi^{-1}]o(\phi o f^{-1} o \psi^{-1}).
\end{align*}
The mappings $\eta o (gof) o \phi^{-1}$ and $\phi o f^{-1} o \psi^{-1}$ are smooth since $gof$ and $f^{-1}$ are $\alpha$--smooth. Therefore, the mapping $\eta o g o \psi^{-1}$ is smooth.
\end{proof}

\section{The tangent space of selective Banach manifolds}

In this section, we present the notion of the tangent space to a $\mu$--selective Banach manifold at a given point, and study its properties. Through out this section, we assume that every vector space is defined over the field $F$, where $F=\mathbb{R}$, or $F=\mathbb{C}$.

Assume that $(M, A, \mu)$ is a $C^n$ selective Banach manifold modeled over the Banach space $E$, and $n\geq 1$. For every $p \in M$ and $\alpha \in Im\mu$, we define an $(r, \alpha)$--differentiable multi-path through $p$ by a multi-function `$\gamma: (-1, 1) \longrightarrow M$' satisfying the following conditions:
\begin{enumerate}[label=d\arabic*),itemindent=1cm,ref=d\arabic*]
\item $\gamma(0)=p$;
\item $\phi o \gamma: (-1, 1) \longrightarrow E$ is a $C^r$ map for all $(U, \phi) \in A$, where $U \bigcap \gamma((-1, 1))\neq\emptyset$, $\mu(U)=\alpha$, and $r\geq 1$.
\end{enumerate}
We denote the set of all $(r, \alpha)$--differentiable multi-paths through $p$ by $W^{p, \alpha}$.

We define the relation `$\sim$' by
\begin{equation*}
\gamma \sim \beta\quad \Longleftrightarrow \quad \dfrac{d(\phi o \gamma)}{dt}(0)=\dfrac{d(\phi o \beta)}{dt}(0);
\end{equation*}
where $U \bigcap \gamma((-1, 1))\neq\emptyset$, $U \bigcap \beta((-1, 1))\neq\emptyset$, and $\mu(U)=\alpha$.
This relation is an equivalence relation.

\begin{definition}
We denote $\dfrac{W^{p, \alpha}}{\sim}$ by $T_p^{\mu, \alpha}(M)$ and call it the tangent space of level $\alpha$ to $M$ at $p$.
\end{definition}

Let $\gamma$ be an $(r, \alpha)$--differentiable multi-path through $p$, $(U, \phi) \in A$, $U \cap \gamma((-1, 1))\neq \emptyset$, and $\mu(U)=\alpha$. We define the $j$th component of $\gamma$ by    \begin{equation*}
\gamma_j^U: (-1, 1) \longmapsto U \quad \text{ with } \quad \gamma_j^U(t)=\gamma(t) \cap U;
\end{equation*}
where $t \in \gamma^{-1}(U)$, $j \in J$, and $CardJ< \infty$.

Let $r\geq 1$. The mapping $\gamma_j^U$ is a $C^r$ map for every $j \in J$.

Restricting $\sim$ to $U$, we have $[\gamma_j^U] \in T_p(U)$. Put
\begin{equation*}
A^{p, \alpha}=\{(U, \phi)\mid (U, \phi) \in A \quad \& \quad \mu(U)=\alpha \quad \& \quad p \in U\}.
\end{equation*}
Now, define: $(U_1, \phi_1) \sim' (U_2, \phi_2)$ iff the following conditions are satisfied:
\begin{enumerate}[label=e\arabic*),itemindent=1cm,ref=e\arabic*]
\item $\phi_i(U_1 \bigcap U_2)$ is a Banach embedded sub-manifold of $\phi_i(U_i)$;
\item there exists a Banach diffeomorphism $f_i: \phi_i(U_1 \bigcap U_2) \longrightarrow \phi_i(U_i)$;
\end{enumerate}
for $i \in \{1, 2\}$.

\begin{theorem}
There exists a one to one correspondence $f: T_p^{\mu, \alpha}(M) \longrightarrow \prod_{[U, \phi]}T_P(U)$ that gives a vector space structure to $T_p^{\mu, \alpha}(M)$ by transferring the structure of $\prod_{[U, \phi]}T_P(U)$ to $T_p^{\mu, \alpha}(M)$.
\end{theorem}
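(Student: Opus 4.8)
The plan is to define the correspondence $f$ directly on representatives, show it is a well-defined bijection, and then obtain the vector space structure purely by transport along $f$. Given a class $[\gamma] \in T_p^{\mu,\alpha}(M)$ and a chart class $[U,\phi]$ (an equivalence class under $\sim'$ of charts $(U,\phi) \in A^{p,\alpha}$), I would send $[\gamma]$ to the tuple whose $[U,\phi]$-entry is the ordinary tangent vector $[\gamma_j^U] \in T_p(U)$ determined by the component $\gamma_j^U$ of $\gamma$ in $U$; that is, $f([\gamma]) = ([\gamma_j^U])_{[U,\phi]}$. Since each $\phi(U)$ is an ordinary $C^n$ Banach manifold and $\gamma_j^U$ is a $C^r$ curve with $\gamma_j^U(0)=p$, the entry $[\gamma_j^U]$ is a genuine element of the classical tangent space $T_p(U)$.

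First I would verify that $f$ is well defined, which requires two checks. That the entry does not depend on the representative of $[\gamma]$: if $\gamma \sim \beta$, then by the defining relation $\frac{d(\phi\circ\gamma)}{dt}(0) = \frac{d(\phi\circ\beta)}{dt}(0)$ for every chart with $\mu(U)=\alpha$, so $[\gamma_j^U]=[\beta_j^U]$ in $T_p(U)$. And that the entry does not depend on the representative of $[U,\phi]$: if $(U_1,\phi_1)\sim'(U_2,\phi_2)$, then conditions e1) and e2) supply, through the Banach diffeomorphisms $f_i$, a canonical identification of $T_p(U_1)$ with $T_p(U_2)$ under which $[\gamma_j^{U_1}]$ and $[\gamma_j^{U_2}]$ correspond. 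This is precisely what makes the product $\prod_{[U,\phi]}T_p(U)$ indexed by $\sim'$-classes meaningful.

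Injectivity is then the converse of the first well-definedness check: if $f([\gamma])=f([\beta])$, the two curves have equal derivative in every admissible chart, which is exactly the condition $\gamma \sim \beta$, so $[\gamma]=[\beta]$. For surjectivity I would argue chart by chart. Given a tuple $(v_{[U,\phi]})_{[U,\phi]}$ in the product, represent each $v_{[U,\phi]}\in T_p(U)$ by an ordinary $C^r$ curve $c_U$ in the Banach manifold $\phi(U)$ with $c_U(0)=\phi(p)$ and $\frac{dc_U}{dt}(0)=v_{[U,\phi]}$; such a curve always exists on a Banach manifold, for instance by pulling back a line segment through a local model chart. Setting the $U$-component of $\gamma$ to be $\phi^{-1}\circ c_U$ and letting $\gamma(t)=\bigcup_U \phi^{-1}(c_U(t))$ produces a multi-function satisfying d1) and d2) whose components realize the prescribed vectors, so $f([\gamma])$ equals the given tuple.

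Finally, the vector space structure is transported through the bijection: the product $\prod_{[U,\phi]}T_p(U)$ carries the componentwise operations of a product of vector spaces, and I would define $[\gamma]+[\beta]:=f^{-1}(f([\gamma])+f([\beta]))$ and $c\cdot[\gamma]:=f^{-1}(c\,f([\gamma]))$, which forces $f$ to be a linear isomorphism and hence forces the vector space axioms. I expect the surjectivity step to be the main obstacle: one must check that the independently chosen chartwise curves $c_U$ glue into a single multi-function that is simultaneously differentiable in every overlapping chart and whose restriction to each $U$ returns $c_U$ up to $\sim$. The compatibility forced by $\sim'$ through conditions e1)--e2) is exactly what guarantees that the local representatives agree on the embedded overlaps, so that the assembled multi-path is legitimate and the transported operations on $T_p^{\mu,\alpha}(M)$ are independent of all choices.
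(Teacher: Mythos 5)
Your overall skeleton is exactly the paper's: you define $f([\gamma])=\prod_{[U,\phi]}[\gamma_j^U]$ and obtain the vector space structure by transporting the componentwise operations back through $f$, which is precisely what the paper does. Note, though, that the paper's own proof simply asserts ``Clearly, $f$ is one to one and onto'' and then only verifies that the transported operations $v+w:=f^{-1}(f(v)+f(w))$ and $c.v:=f^{-1}(c.f(v))$ are well defined; everything you add about well-definedness over chart classes, injectivity, and surjectivity is detail the paper does not supply, so your attempt usefully surfaces the real content hidden behind that ``clearly.''

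However, your surjectivity argument, and the ``canonical identification'' you lean on twice, contain a genuine gap. Condition e2) only asserts the existence of \emph{some} Banach diffeomorphism $f_i:\phi_i(U_1\cap U_2)\longrightarrow\phi_i(U_i)$, with no compatibility with the chart maps $\phi_1,\phi_2$ and no naturality; it therefore induces no canonical linear isomorphism $T_p(U_1)\cong T_p(U_2)$, and nothing forces $[\gamma_j^{U_1}]$ and $[\gamma_j^{U_2}]$ to correspond under whichever diffeomorphism one picks. More seriously, the glued multi-function $\gamma(t)=\bigcup_U\phi^{-1}(c_U(t))$ need not be an admissible multi-path: every branch passes through $p$ at $t=0$, so a chart $(U,\phi)\in A^{p,\alpha}$ will in general meet several branches for small $t$; then $\gamma(t)\cap U$ is not a single point, $\phi\circ\gamma$ is genuinely multivalued, and condition d2) fails unless the independently chosen curves $c_U$ agree on overlaps for $t\neq 0$ --- which $\sim'$ cannot enforce, since the $c_U$ were synchronized only through their (mutually unrelated) derivatives at $0$, not as point sets. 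There is also a cardinality problem: the paper's multi-paths have finitely many components ($\mathrm{Card}\,J<\infty$), while your union carries one branch per chart class, of which there may be infinitely many. To be fair, the paper is silent on all of this, but as written your surjectivity step does not go through.
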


\begin{proof}
Define the mapping $f$ by
\begin{equation*}
f: T_p^{\mu, \alpha}(M) \longrightarrow \prod_{[U, \phi]}T_P(U)
\end{equation*}
\begin{equation*}
f([\gamma]) = \prod_{[U, \phi]}[\gamma_j^U].
\end{equation*}
Clearly, $f$ is one to one and onto. Let $v, w \in T_p^{\mu, \alpha}(M)$ be given. Then, $T_p^{\mu, \alpha}(M)$ endowed with the operations `$+$' and `$.$'
\begin{equation*}
v+w:=f^{-1}((f(v)+f(w));
\end{equation*}
\begin{equation*}
c.v:=f^{-1}(c.f(v));
\end{equation*}
is a vector space.

In fact, for $(v, w),(v',w') \in T_p^{\mu, \alpha}(M) \times T_p^{\mu, \alpha}(M)$ with $v=v'$ and $w=w'$, we have
\begin{equation*}
v+w=f^{-1}(f(v)+f(w))=f^{-1}(f(v')+f(w'))=v'+w';
\end{equation*}
\begin{equation*}
c.v=f^{-1}(c.f(v))=f^{-1}(c.f(v'))=c.v'.
\end{equation*}
Thus, the operations `$+$' and `$.$' are well-defined. It is easy to check that the other conditions are held.
\end{proof}

\begin{definition}
Let $(M_i, D_{M_i})$ be a $C^n$ $\mu_i$--selective Banach manifold for $i \in \{1, 2\}$, $\alpha \in Im\mu_i$, and $f: M_1 \longrightarrow M_2$ be an $(r, \alpha)$--differentiable mapping at $p \in M_1$. We denote the $\alpha$--level differential of $f$ at $p$ by $d_p^{\alpha}f$, and define it as the linear mapping
\begin{equation*}
d_p^{\alpha}(f): T_p^{\mu_1, \alpha}(M_1) \longrightarrow T_{f(p)}^{\mu_2, \alpha}(M_2)
\end{equation*}
\begin{equation*}
[\gamma] \longmapsto [f o \gamma];
\end{equation*}
where $\gamma: (-1, 1) \longrightarrow M_1$, and $\gamma(0)=p$.
\end{definition}

The observers have an essential role in $(r, \alpha)$--differentiability of mappings between $\mu$--selective Banach manifolds so that a mapping that is $(r, \alpha)$--differentiable with respect to a given observer may not be $(r, \alpha)$--differentiable with respect to another choice of observer. As an example, for the constant mapping $f\equiv c$, consider the observer for which $K_{f, \alpha}=\emptyset$. In this case, $f$ is not $(r, \alpha)$--differentiable.

\begin{theorem}
Let $(M_i, D_{M_i})$ be a $C^n$ $\mu_i$--selective Banach manifold for $i \in \{1, 2\}$, and $f: M_1 \longrightarrow M_2$ is a constant mapping. If $f$ is $(r, \alpha)$--differentiable, then $d_p^{\alpha}f=0$ for all $p \in M_1$.
\end{theorem}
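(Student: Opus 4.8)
The plan is to show that $d_p^{\alpha}f$ annihilates every tangent vector, by pushing an arbitrary representative through the definition of the differential and checking that its image has vanishing derivative in every qualifying chart. Fix $p \in M_1$ and let $[\gamma] \in T_p^{\mu_1, \alpha}(M_1)$ be arbitrary, where $\gamma$ is an $(r, \alpha)$--differentiable multi-path with $\gamma(0) = p$. By the definition of the $\alpha$--level differential, $d_p^{\alpha}f([\gamma]) = [f \circ \gamma]$, so the entire matter reduces to proving that $f \circ \gamma$ represents the zero element of $T_{f(p)}^{\mu_2, \alpha}(M_2)$.

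First I would confirm that $f \circ \gamma$ is a genuine $(r, \alpha)$--differentiable multi-path through $f(p)$, so that $[f \circ \gamma]$ is a legitimate element of $T_{f(p)}^{\mu_2, \alpha}(M_2)$. Writing $f \equiv c$ for the constant value, we have $(f \circ \gamma)(0) = f(p) = c$, which is condition d1. Because $d_p^{\alpha}f$ is assumed defined, the target space $T_{f(p)}^{\mu_2, \alpha}(M_2)$ is built from charts $(V, \psi) \in D_{M_2}$ with $\mu_2(V) = \alpha$ and $f(p) \in V$; fix such a chart. For it, and indeed for every chart meeting the (single-point) image of $f \circ \gamma$, the composite $\psi \circ (f \circ \gamma)$ is the constant map $t \mapsto \psi(c)$, which is $C^r$, so condition d2 holds. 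Hence $f \circ \gamma \in W^{f(p), \alpha}$.

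The core of the argument is then the derivative that defines the equivalence class. Since $\psi \circ (f \circ \gamma)$ is constant,
\begin{equation*}
\frac{d(\psi \circ (f \circ \gamma))}{dt}(0) = 0,
\end{equation*}
and likewise each component $(f \circ \gamma)_j^V$ is constant, so $[(f \circ \gamma)_j^V] = 0$ in $T_{f(p)}(V)$. Under the vector space isomorphism $T_{f(p)}^{\mu_2, \alpha}(M_2) \cong \prod_{[V, \psi]} T_{f(p)}(V)$ supplied by the theorem that equips the tangent space with its vector space structure, the class $[f \circ \gamma]$ is carried to the tuple all of whose entries vanish, that is, to the zero vector. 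As a vector space isomorphism sends zero only to zero, this forces $[f \circ \gamma] = 0$. Therefore $d_p^{\alpha}f([\gamma]) = [f \circ \gamma] = 0$; since $[\gamma]$ and $p$ were arbitrary, $d_p^{\alpha}f = 0$ at every point of $M_1$.

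I expect the only delicate point to be the bookkeeping forced by the multi-path (multi-function) formalism: one must be sure that the vanishing of the derivative in a single qualifying chart $(V, \psi)$ is precisely what the equivalence relation $\sim$, and hence the zero vector, requires, and that every component $(f \circ \gamma)_j^V$ of the constant multi-path is itself constant, so that no component can contribute a nonzero derivative. Routing the verification through the explicit isomorphism to $\prod_{[V, \psi]} T_{f(p)}(V)$ sidesteps this cleanly, since there the zero vector is visibly the tuple of zero classes.
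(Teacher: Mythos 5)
Your proposal is correct, and it takes a more explicit route than the paper's, though the underlying fact is the same. The paper's entire proof is the observation that for any $(U, \phi; V, \psi) \in K_{f, \alpha}$ the local representative $\psi \circ f \circ \phi^{-1}$ is constant, hence $d_{\phi(p)}(\psi \circ f \circ \phi^{-1}) = 0$, from which it concludes $d_p^{\alpha}f = 0$ --- leaving implicit how the vanishing of the chart differential translates into the vanishing of the curve-class map $[\gamma] \longmapsto [f \circ \gamma]$. You fill exactly that gap: you verify that $f \circ \gamma$ is a legitimate element of $W^{f(p), \alpha}$ (conditions d1 and d2), compute $\frac{d}{dt}\bigl(\psi \circ (f \circ \gamma)\bigr)(0) = 0$ directly from constancy, and then identify $[f \circ \gamma]$ with the zero vector via the bijection onto $\prod_{[V, \psi]} T_{f(p)}(V)$ that defines the vector space structure --- which is the right move, since ``zero'' in $T_{f(p)}^{\mu_2, \alpha}(M_2)$ has meaning only through that transferred structure, a point the paper's proof skips entirely. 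The two computations agree, since by the chain rule the derivative you compute equals $d_{\phi(p)}(\psi \circ f \circ \phi^{-1})$ applied to $(\phi \circ \gamma)'(0)$; so in substance nothing differs, but your version buys the fact that $d_p^{\alpha}f = 0$ actually follows from the paper's own definitions of the tangent space and of the differential, rather than being asserted from the chart-level identity. Two minor remarks: your observation that each component $(f \circ \gamma)_j^V$ is constant is correct but redundant once the chart computation is done, and both you and the paper silently rely on $d_p^{\alpha}f$ being well defined on equivalence classes (independence of the representative $\gamma$), which neither argument checks but which is harmless here because the image class is the same zero class for every representative.
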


\begin{proof}
Since $f$ is $(r, \alpha)$--differentiable, then $K_{f, \alpha}\neq\emptyset$. Thus, $d_{\phi(p)}(\psi o f o \phi^{-1})=0$ for every $(U, \phi; V, \psi) \in K_{f, \alpha}$. Therefore, for all $p \in M_1$, $d_p^{\alpha}f=0$.
\end{proof}

\section{Conclusion}
In this paper, the concept of the $\mu$--selective Banach manifold is introduced, and it's fundamental properties are studied. Specifically, it is proved that the product of two selective Banach manifolds is a selective Banach manifold. Next, the concept of the $\alpha$--level differentiation of the mappings between selective Banach manifolds is presented and a version of chain rule theorem is given for the mappings between $\mu$--selective Banach manifolds. Moreover, the notion of the tangent space of a selective Banach manifold at a given point is studied.

It will be interesting to establish a version of Inverse Function Theorem for the $\mu$--selective Banach manifolds for further research.

%%%%%%%%%%%%%%%%%%%%%%%%%%%%%%%%%


\begin{thebibliography}{99}
\bibitem{DD12}Khadekar, G.S. and Kondwar, G. {\it Spherically symmetric static fluids in Rosen's bimetric theory of gravitation,} J. Dyn. Syst. Geom. Theor., {\bf 4}(1), 95-102, 2006.
\bibitem{DD33}Kriegl, A. and Michor P. {\it The convenient setting global analysis,} Mathematical surveys and monographs (American Mathematical Society 1997).
\bibitem{DD3}Malziri, M. and Molaei, M.R. {\it Relative metric spaces,} Hacettepe Journal of Mathematics and statistics, {\bf 40}(5), 703--709, 2011.
\bibitem{DD20}Molaei, M.R. {\it Relative semi--dynamical systems,} International Journal of Uncertainty, Fuzziness and Knowledge-based Systems, {\bf 12}(2), 237-243, 2004.
\bibitem{DD4}Molaei, M.R. {\it Mathematical modeling of observer in physical systems}, J. Dyn. Syst. Geom. Theor.,  {\bf 4}(2), 183--186, 2006.
\bibitem{DD6}Molaei, M.R. {\it Relative vector fields,} J. Interdiscip. Math., {\bf 9}(3), 499--506, 2006.
\bibitem{DD5}Molaei, M.R. {\it Observational modeling of topological spaces,} Chaos solitons \& Fractals,  42, 615--619, doi: 10.1016/j.chaos.2009.01.035, 2009.
\bibitem{DD7}Molaei, M.R. {\it Selective manifolds,} Journal of Advanced Research in Applied Mathematics, {\bf 3}(2), 1-7  doi: 10.5373/jaram.617.103010, 2011.
\bibitem{DD8}Molaei, M.R. {\it The concept of synchronization from the observer's viewpoint,} \c{C}ankaya University Journal of Science and Engineering,  {\bf 8}(2), 255--262, 2011.
\bibitem{DD11}Molaei, M.R. and Ghazanfari, B. {\it Relative entropy of relative measure preserving maps with constant observers,} J. Dyn. Syst. Geom. Theor., {\bf 5}(2), 179-191, 2007.
\bibitem{DD1}Molaei, M.R., Hoseini Anvari, M.R. and Haqiri, T. {\it On relative semi-dynamical systems,} Intelligent automation and soft computing, {\bf 13}(4), 405--413, 2007.
\bibitem{DD2} Molaei, M.R. and Hoseini Anvari, M.R. {\it Relative manifolds,} Intelligent automation and soft computing, {\bf 14}(2), 213--220, 2008.
\bibitem{DD9}Santilli, R.M. {\it Iso-, Geno-, Hyper-mechanics for matter, their isoduals for antimatter and their novel applications in physics, chemistry and biology,} J. Dyn. Syst. Geom. Theor., {\bf 1}(2), 121-193, 2003.

\end{thebibliography}
\end{document}